\documentclass[12pt]{article}%
\usepackage{amsmath}
\usepackage{amsfonts}
\usepackage{amssymb}
\usepackage{graphicx}%
\newtheorem{theorem}{Theorem}

\newenvironment{proof}[1][Proof]{\noindent\textbf{#1.} }{\ \rule{0.5em}{0.5em}}
\begin{document}

\title{Strong truncations and Maximal Ideal Principles}
\author{Karim Boulabiar\\{\small Laboratoire de Recherche LATAO}\\{\small D\'{e}partement de Mathematiques, Facult\'{e} des Sciences de Tunis}\\{\small Universit\'{e} de Tunis El Manar, 2092, El Manar, Tunisia}\\{\small and}\\{\small Mathematical Institute, Leiden University}\\{\small P.O. Box 9512, 2300 RA Leiden, The Netherlands}\\{\small Email: karim.boulabiar@fst.utm.tn }
\and Mohamed Habibi\\{\small Laboratoire de Recherche LATAO}\\{\small D\'{e}partement de Mathematiques, Facult\'{e} des Sciences de Tunis}\\{\small Universit\'{e} de Tunis El Manar, 2092, El Manar, Tunisia}\\{\small Email: mohamed.habibi@ipest.ucar.tn}}
\date{}
\maketitle

\begin{abstract}
We compare two existence principles for maximal ideals, a classical one for
vector lattices with a strong unit and a second, newly introduced one for
vector lattices with a strong truncation. Although the latter strictly
generalizes the former, we show that the two statements are equivalent over ZF
set theory.

\end{abstract}

\noindent{\small 2020 Mathematics Subject Classification. Primary: 06F20;
46J20.}

\noindent{\small Key words and phrases. disjoint complement; maximal ideal;
strong truncation; strong unit; vector lattice}\medskip

\section{Introduction}

Following Ball \cite{B14}, we call a \emph{truncation} on a vector lattice $X$
any unary operation $\ast$ on the positive cone $X_{+}$ satisfying%
\[
x^{\ast}\wedge y=x\wedge y^{\ast}\quad\text{for all }x,y\in X_{+}.
\]
In \cite{BH20}, such a truncation is said to be \emph{strong} if, for every
$x\in X_{+}$, there exists $\lambda\in\left(  0,\infty\right)  $ such that
$\left(  \lambda x\right)  ^{\ast}=\lambda x$. A basic example arises when $u$
is a positive strong unit of $X$. In this case,%
\[
x^{\ast}=u\wedge x\quad\text{for all }x\in X_{+}%
\]
defines a strong truncation on $X$. Hence, every vector lattice with strong
unit admit a strong truncation. The converse fails in general; for example,
$C\left(  Y\right)  $, where $Y$ is a non-compact locally compact Hausdorff
space, admits plenty of strong truncations but has no strong units.

Let $X$ be a vector lattice with a strong unit $u$. Since no proper ideal of
$X$ contains $u$, the union of any chain of proper ideals is again a proper
ideal. Hence, by Zorn's Lemma, $X$ admits a maximal ideal. The same argument
yields the classical Krull's Theorem, asserting that every commutative ring
with identity has maximal ideals. Despite this formal similarity, the
underlying logical strength of these results differs substantially. Indeed,
Krull's Theorem is equivalent to Zorn's Lemma \cite{B94,H79}, whereas its
vector lattice analogue is equivalent to the strictly weaker Ultrafilter
Theorem \cite{BvR89}.

This naturally raises the question whether a maximal ideal principle persists
in the broader setting of vector lattices equipped with a strong truncation.
As we shall see below, the answer is affirmative. Although the class of vector
lattices with strong truncation is substantially larger than that of vector
lattices with a strong unit, the following two statements are equivalent in ZF
set theory.

\begin{description}
\item[$\mathrm{(T)}$] Every vector lattice with a strong truncation contains a
maximal ideal.

\item[$\mathrm{(U)}$] Every vector lattice with a strong unit contains a
maximal ideal.
\end{description}

The proof is not entirely straightforward. In the absence of a strong unit,
one lacks a canonical reference point on which the usual maximality arguments
rely. In particular, Zorn's Lemma plays no role here, neither in a technical
nor in a conceptual sense.

\section{The proof of the equivalence}

In what follows, $X$ denotes a real vector lattice. In order to make the note
reasonably self-contained, we first recall from \cite{LZ71} the terminology,
notation, and results that will be used in the proof of the main theorem.

A vector subspace $J$ of $X$ is called an \emph{ideal} if, for every $x,y\in
X$, it follows from $\left\vert x\right\vert \leq\left\vert y\right\vert $ and
$y\in J$ that $x\in J$. It is clear that any ideal of $X$ is itself a vector
lattice with respect to the operations and order inherited from $X$. The
\emph{disjoint complement} (or the \emph{polar}) of an element $y\in X_{+}$ is
defined by%
\[
y^{\bot}=\left\{  x\in X:\left\vert x\right\vert \wedge y=0\right\}  .
\]
It is easily checked that $y^{\bot}$ is an ideal of $X$. On the other hand,
the smallest ideal of $X$ containing $y$ is given by%
\[
J_{y}=\left\{  x\in X:\left\vert x\right\vert \leq\lambda y\text{ for some
}\lambda\in\left(  0,\infty\right)  \right\}  .
\]
The element $y$ is said to be a \emph{strong unit} if $X=J_{y}$.

Let $J$ be an ideal of $X$, and let $J\left(  x\right)  $ denote the
equivalence class of any $x\in J$ in the quotient vector space $X/J$.
Accordingly, we use the symbol $J$ to refer, depending on the context, either
to the ideal $J$ itself or to the canonical projection from $X$ onto $X/J$. A
natural order can be defined on $X/J$ by putting%
\[
J\left(  x\right)  \leq J\left(  y\right)  \text{\quad if and only if\quad
}x\leq a+y\text{ for some }a\in J.
\]
Under this order, $X/J$ becomes a vector lattice, and $J$ is a lattice
homomorphism., i.e., a linear map preserving absolute value. It is quite
straightforward to verify that if $\mathcal{J}$ is a (maximal) ideal of $X/J$,
then its preimage%
\[
J^{-1}\left(  \mathcal{J}\right)  =\left\{  x\in X:J\left(  x\right)
\in\mathcal{J}\right\}  .
\]
is a (maximal) ideal of $X\ $containing $J$.

We now prove our main theorem.

\begin{theorem}
The axioms $\mathrm{(T)}$ and $\mathrm{(U)}$ are equivalent.
\end{theorem}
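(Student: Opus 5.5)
The implication $\mathrm{(T)}\Rightarrow\mathrm{(U)}$ is immediate. As noted in the introduction, if $u$ is a positive strong unit of $X$, then $x^{\ast}=u\wedge x$ is a strong truncation on $X$. So $\mathrm{(T)}$ applied to $(X,\ast)$ yields a maximal ideal. All the work is in $\mathrm{(U)}\Rightarrow\mathrm{(T)}$. The plan there is to cut out an ideal of $X$ that has a strong unit, apply $\mathrm{(U)}$ to it, and transport the resulting maximal ideal back to $X$ without any Zorn-type argument.

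Let $X\neq\{0\}$ carry a strong truncation $\ast$. First fix $0<x_{0}\in X_{+}$ and use strongness to get $\lambda>0$ with $(\lambda x_{0})^{\ast}=\lambda x_{0}$. Put $u=\lambda x_{0}$, a nonzero fixed point of $\ast$. The principal ideal $J_{u}$ is a vector lattice with strong unit $u$, so $\mathrm{(U)}$ gives a maximal ideal $M$ of $J_{u}$. Next define
\[
K=\left\{ x\in X:\left\vert x\right\vert \wedge u\in M\right\}.
\]
From $(\left\vert x\right\vert+\left\vert y\right\vert)\wedge u\leq\left\vert x\right\vert\wedge u+\left\vert y\right\vert\wedge u$, $(\alpha\left\vert x\right\vert)\wedge u\leq\max(1,\alpha)(\left\vert x\right\vert\wedge u)$ and solidity, one checks that $K$ is an ideal of $X$. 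It contains $u^{\bot}$, satisfies $K\cap J_{u}=M$, and does not contain $u$. By the quotient remark preceding the theorem, it then suffices to show that $X/K$ has no ideals other than $\{0\}$ and itself. The way to do this is to show that $K(u)$ is a strong unit of $X/K$ and that $J_{u}/M\cong\mathbb{R}$ forces $X/K\cong\mathbb{R}$.

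That $J_{u}/M$ is one-dimensional, with $M(u)\mapsto1$, is choice-free. A vector lattice with strong unit and no proper nonzero ideals is totally ordered and Archimedean, hence equal to $\mathbb{R}$ times the unit. Consequently $K$ is prime in the sense that $x\wedge y\in K$ implies $x\in K$ or $y\in K$: one reduces to $x\wedge u$ and $y\wedge u$, which lie in $J_{u}$, where $M$ is prime.

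The step I expect to be the main obstacle is the boundedness: for each $x\in X_{+}$ one needs an $n$ with $(x-nu)^{+}\in K$. Equivalently, $K(x)\leq nK(u)$ must hold in the totally ordered quotient $X/K$. Suppose instead that $K(x)>nK(u)$ for all $n$. By primeness, $(x/n)\wedge u\equiv u$ modulo $K$ for every $n$, so $x$ would be "infinitely large" over $u$ modulo $K$. The truncation identity must rule this out. My first attempt is to choose $\mu>0$ with $(\mu x)^{\ast}=\mu x$ and compute
\[
\mu x\wedge u=(\mu x)^{\ast}\wedge u=\mu x\wedge u^{\ast},
\]
and likewise for $(\mu x)^{\ast}\wedge(\beta u)$ with $\beta>0$, comparing truncations of different multiples of $u$ and $x$ modulo $K$. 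If the choice $u=(\lambda x_{0})$ turns out too weak for this, the fallback is to replace $u$ by a better-adapted element, such as $u^{\ast}$ or the truncation of a larger multiple of $x_{0}$. Once the bound is in hand, $X/K=J_{K(u)}$ and $X/K$ embeds into $J_{u}/M\cong\mathbb{R}$ via $K(x)\mapsto$ the coefficient of $x\wedge nu$, so $X/K\cong\mathbb{R}$ and $K$ is a maximal ideal of $X$.
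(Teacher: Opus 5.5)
There is a genuine gap, and it is exactly the step you flag as the main obstacle. With your construction the boundedness claim is not merely unproven; it is false. (Your $\mathrm{(T)}\Rightarrow\mathrm{(U)}$ direction is fine.)

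Because $u^{\ast}=u$ and $(\mu x)^{\ast}=\mu x$, the identity you want to exploit reads $\mu x\wedge u=\mu x\wedge u$, which says nothing. The fallbacks do not help. Putting $y=x^{\ast}$ in $x^{\ast}\wedge y=x\wedge y^{\ast}$ gives $x^{\ast}=x\wedge x^{\ast\ast}$, hence $x^{\ast\ast}=x^{\ast}$, so $u^{\ast}$ and $(\mu x_{0})^{\ast}$ are again fixed points.

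Here is a concrete failure. Take $X=c_{0}$ with $x^{\ast}=x\wedge\mathbf{1}$, and $x_{0}=u=(1/n)_{n}$. The map $x\mapsto(nx_{n})_{n}$ identifies $J_{u}$ with $\ell^{\infty}$. Suppose the maximal ideal supplied by (U) is $M=\{x\in J_{u}:\lim_{p}nx_{n}=0\}$ for a free ultrafilter $p$; nothing in (U) lets you exclude this. Then $K=\{x\in c_{0}:\lim_{p}n|x_{n}|=0\}$. For $x=(n^{-1/2})_{n}$ one gets $n(x-mu)^{+}_{n}=(\sqrt{n}-m)^{+}\to\infty$, so $(x-mu)^{+}\notin K$ for every $m$. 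Hence $K\subsetneq K+J_{u}\subsetneq X$, and $K$ is prime but not maximal.

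There is also a structural reason this cannot be repaired. An argument that only uses fixed points of $\ast$ uses nothing the identity map fails to satisfy. The identity map satisfies the truncation identity and strongness, so $L^{1}[0,1]$ with it would meet the hypothesis of (T). Yet $L^{1}[0,1]$ has no maximal ideal, since it admits no nonzero real-valued lattice homomorphism. So any correct proof must use an element $u\in X_{+}$ with $u^{\ast}<u$, which is exactly where the paper's proof starts.

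The missing idea is to cut off everything lying beyond such a $u$ with a polar, and to apply (U) to a quotient rather than to a principal ideal. Let $J=(u-u^{\ast})^{\bot}$. If $y^{\ast}=y$, then $u-u^{\ast}\leq u-u^{\ast}\wedge y=u-u\wedge y^{\ast}=(u-y)^{+}$. Hence $(y-u)^{+}\wedge(u-u^{\ast})\leq(y-u)^{+}\wedge(u-y)^{+}=0$, that is, $(y-u)^{+}\in J$. Apply this to $y=\lambda x$ given by strongness: $(x-nu)^{+}\leq n(\lambda x-u)^{+}\in J$ whenever $n\lambda\geq1$. So $J(u)$ is a nonzero strong unit of $X/J$. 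Then (U) yields a maximal ideal of $X/J$, and its preimage is a maximal ideal of $X$.

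Even with $u^{\ast}<u$, your route through an arbitrary maximal ideal of $J_{u}$ would still break. In $c_{0}$, $u=(2,\frac12,\frac13,\dots)$ has $u-u^{\ast}$ equal to the first unit vector, and the same free-ultrafilter $M$ destroys boundedness. What is really needed is a maximal ideal containing $(u-u^{\ast})^{\bot}$. Passing to $X/J$ builds this in, and it makes the prime-ideal and $J_{u}/M\cong\mathbb{R}$ machinery unnecessary.
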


\begin{proof}
The implication $\mathrm{(T)\Rightarrow(U)}$ follows immediately from the
observation in the introduction that any positive strong unit induces a strong
truncation. Let us prove the converse implication $\mathrm{(U)\Rightarrow(T)}$.

Assume $\mathrm{(U)}$, and let $X$ be a vector lattice with a strong
truncation $\ast$. By definition, there exists $u\in X_{+}$ such that
$u^{\ast}<u$. Let $J$ denote the polar of $u-u^{\ast}$, that is,%
\[
J=\left(  u-u^{\ast}\right)  ^{\bot}=\left\{  x\in X:\left\vert x\right\vert
\wedge\left(  u-u^{\ast}\right)  =0\right\}  .
\]
It is clear that $u\notin J$. Let $x\in X_{+}$ and $\lambda\in\left(
0,\infty\right)  $ such that $\left(  \lambda\left\vert x\right\vert \right)
^{\ast}=\lambda\left\vert x\right\vert $ (such a $\lambda$ exists because
$\ast$ is a strong truncation). Set $y=\lambda\left\vert x\right\vert $ and
observe that%
\[
0<u-u^{\ast}\leq u-u^{\ast}\wedge y=u-u\wedge y^{\ast}=\left(  u-y\right)
^{+}.
\]
It follows that%
\[
0\leq\left(  y-u\right)  ^{+}\wedge\left(  u-u^{\ast}\right)  \leq\left(
y-u\right)  ^{+}\wedge\left(  u-y\right)  ^{+}=0,
\]
so $\left(  y-u\right)  ^{+}\in J$. Let $n\in\mathbb{N}$ satisfy $n\lambda
\geq1$. Hence,%
\[
\left(  \left\vert x\right\vert -nu\right)  ^{+}=n\left(  \frac{1}%
{n}\left\vert x\right\vert -u\right)  ^{+}\leq n\left(  \lambda\left\vert
x\right\vert -u\right)  ^{+}=n\left(  u-y\right)  ^{-}\in J.
\]
Consequently,%
\[
J\left(  \left\vert x\right\vert \right)  \leq nJ\left(  u\right)  \text{ in
}X/J.
\]
This shows that $J\left(  u\right)  $ is a strong unit in $X/J$. In view of
$\mathrm{(U)}$, the vector lattice $X/J$ has a maximal ideal, say
$\mathcal{M}$. But then the preimage $J^{-1}\left(  \mathcal{M}\right)  $ of
$\mathcal{M}$ under $J$ is a maximal ideal of $X$ and the proof is complete.
\end{proof}

\section{Acknowledgments}

This work was supported by the LATOA Grant LR11ES12.

\end{document}